\newcommand{\argmax}{\operatorname{argmax}}
\def\leq{\leqslant}
\def\geq{\geqslant}
\numberwithin{equation}{section}
\newtheoremstyle{thmlemcorr}{10pt}{10pt}{\itshape}{}{\bfseries}{.}{10pt}{{\thmname{#1}\thmnumber{
#2}\thmnote{ (#3)}}}
\newtheoremstyle{thmlemcorr*}{10pt}{10pt}{\itshape}{}{\bfseries}{.}\newline{{\thmname{#1}\thmnumber{
\newtheoremstyle{defi}{10pt}{10pt}{\itshape}{}{\bfseries}{.}{10pt}{{\thmname{#1}\thmnumber{
#2}\thmnote{ (#3)}}}
\newtheoremstyle{remexample}{10pt}{10pt}{}{}{\bfseries}{.}{10pt}{{\thmname{#1}\thmnumber{
#2}\thmnote{ (#3)}}}
\newtheoremstyle{ass}{10pt}{10pt}{}{}{\bfseries}{.}{10pt}{{\thmname{#1}\thmnumber{
A#2}\thmnote{ (#3)}}}
\theoremstyle{thmlemcorr}
\newtheorem{theorem}{Theorem}
\numberwithin{theorem}{section}
\newtheorem{proposition}[theorem]{Proposition}
\theoremstyle{thmlemcorr*}
\newtheorem{theorem*}{Theorem}
\newtheorem{lemma*}[theorem]{Lemma}
\newtheorem{corollary*}[theorem]{Corollary}
\newtheorem{proposition*}[theorem]{Proposition}
\newtheorem{problem*}[theorem]{Problem}
\newtheorem{conjecture*}[theorem]{Conjecture}
\theoremstyle{defi}
\theoremstyle{remexample}
\theoremstyle{plain}
\theoremstyle{ass}
\title[Forced mean curvature flow]{A convergence rate of periodic homogenization for forced mean curvature flow of graphs in the laminar setting}
\begin{document}

%\author{}
%\affil{dfdf}
%\address[]
%{%
%	Department of Mathematics, 
%	University of Wisconsin Madison, Van Vleck hall, 480 Lincoln drive, Madison, WI 53706, USA}
%\email{}

\author{Jiwoong Jang}
\address[Jiwoong Jang]
{
	Department of Mathematics, 
	University of Wisconsin Madison, Van Vleck hall, 480 Lincoln drive, Madison, WI 53706, USA}
\email{jjang57@wisc.edu}

%\author{Hung V. Tran}
%\address[Hung V. Tran]
%{
%	Department of Mathematics, 
%	University of Wisconsin Madison, Van Vleck hall, 480 Lincoln drive, Madison, WI 53706, USA}
%\email{hung@math.wisc.edu}
%\maketitle

\thanks{
	The work of JJ was partially supported by NSF CAREER grant DMS-1843320.
}

%\date{\today}
\keywords{Cell problem; mean curvature flow of graphs; optimal rate of convergence; periodic homogenization}
\subjclass[2010]{
    35B10, %Periodic solutions to PDEs
    35B27, %Homogenization in context of PDEs; PDEs in media with periodic structure
    35B40, %Asymptotic behavior of solutions to PDEs
	35B45, %A priori estimates in context of PDEs
	35D40, %Viscosity solutions to PDEs
	35K93, %Quasilinear parabolic equations with mean curvature operator
	53E10. %Flows related to mean curvature
}

\begin{abstract}
In this paper, we obtain the rate $O(\varepsilon^{1/2})$ of convergence in periodic homogenization of forced graphical mean curvature flows in the laminated setting. We also discuss with an example that a faster rate cannot be obtained by utilizing Lipscthiz estimates.
\end{abstract}

\maketitle

%%%%%%%%%%%%%%%%%%%%%%%%%%%%%%%%%%%%%%%%%%%%%%%%%%%%%%%%%%%%%%%%%%%%%

\section{Introduction} \label{sec:introduction}
%\subsection{Settings}\label{subsec:1.1}
In this paper, we are interested in the quantitative understanding of convergence of graphical hypersurfaces $\Gamma^{\varepsilon}(t)(\subseteq\mathbb{R}^{n+1})$ to $\Gamma(t)$ as $\varepsilon\to0$ in a laminated environment, where the hypersurfaces $\Gamma^{\varepsilon}(t)$ evolve by the normal velocity
$$
V=\varepsilon\kappa+c\left(\frac{x}{\varepsilon}\right).
$$
Here, $\kappa$ is the mean curvature of the hypersurface, and $c$ is a given force depending on the spatial variable periodically. Fixing axes of $\mathbb{R}^{n+1}$, we write $c=c(x)$ with $x=(x_1,\cdots,x_n)$. The media is laminated so that $c$ is independent of $x_{n+1}$.

If $\Gamma^{\varepsilon}(t)$ has a height function $u^{\varepsilon}(\cdot,t)$ so that $\Gamma^{\varepsilon}(t)=\{(x,u^{\varepsilon}(x,t)):x\in\mathbb{R}^n\}$, then the evolution of hypersurfaces $\Gamma^{\varepsilon}(t)$, with an initial graph $\{(x,u_0(x)):x\in\mathbb{R}^n\}$, is described by the equation
\begin{equation}\label{eq:epsilon}
\begin{cases}
u^{\varepsilon}_t+F\left(\varepsilon D^2u^{\varepsilon},Du^{\varepsilon},\frac{x}{\varepsilon}\right)=0 \quad &\text{ in } \mathbb{R}^n\times(0,\infty),\\
u^{\varepsilon}(x,0)=u_0(x) \quad &\text{ on } \mathbb{R}^n
\end{cases}
\end{equation}
for $\varepsilon\in(0,1]$, where $F=F(X,p,y)$ is the mean curvature operator with a forcing term of graphs
$$
F(X,p,y)=-\textrm{tr}\left\{\left(I_n-\frac{p\otimes p}{1+|p|^2}\right)X\right\}-c\left(y\right)\sqrt{1+|p|^2},
$$
for $(X,p,y)\in S^n\times\mathbb{R}^n\times\mathbb{R}^n$, $n\geq1$. The precise meaning of notations will be introduced later.

Throughout this paper, we impose the following assumptions on the forcing term $c$;
\begin{align*}
&\textrm{(A1)}\ c\in C^{2}(\mathbb{R}^n);\hspace{20mm}\\
&\textrm{(A2)}\ c=c(y)\ \textrm{is $\mathbb{Z}^n$-periodic in }y\in\mathbb{R}^n\textrm{, i.e., $c(y+k)=c(y)$ for $k\in\mathbb{Z}^n,\ y\in\mathbb{R}^n$};\\
&\textrm{(A3)}\ c(y)^2-(n-1)|Dc(y)|>\delta\ \ \ \textrm{for all }y\in\mathbb{R}^n,\ \textrm{for some }\delta>0.
\end{align*}
We also assume that $u_0\in\mathrm{Lip}(\mathbb{R}^n)$.

Under the assumptions (A1)--(A3), it is known (see \cite{E,LS} for instance) that $u^{\varepsilon}$ converges locally uniformly to $u$ as $\varepsilon\to0^{+}$ on $\mathbb{R}^n\times[0,\infty)$, which is a viscosity solution to the effective equation
\begin{equation}\label{eq:effec}
\begin{cases}
u_t+\overline{F}(Du)=0 \quad &\text{ in } \mathbb{R}^n\times(0,\infty),\\
u(x,0)=u_0(x) \quad &\text{ on } \mathbb{R}^n.
\end{cases}
\end{equation}
Here, $\overline{F}(p)$ is the unique real number such that the cell problem
$$
F(D^2v,p+Dv,y)=\overline{F}(p)\hspace{1cm}\textrm{on }\mathbb{R}^n.
$$
admits a $\mathbb{Z}^n$-periodic solution $v\in C^{2,\alpha}(\mathbb{R}^n)$ for some $\alpha\in(0,1)$. We refer to \cite{E,LPV} for the definition of $\overline{F}(p)$, or that of the effective Hamiltonian $\overline{H}(p)$.

The main goal of this paper is to obtain a rate of convergence of $u^{\varepsilon}$ to $u$ as $\varepsilon\to0^+$ by proving (i) that $\|u^{\varepsilon}-u\|_{L^{\infty}(\mathbb{R}^n\times[0,T])}$ is $O(\varepsilon^{1/2})$ for any given $T>0$, and (ii) that $|u^{\varepsilon}(x_0,t_0)-u(x_0,t_0)|$ is $\Omega(\varepsilon^{1/2})$, i.e., $|u^{\varepsilon}(x_0,t_0)-u(x_0,t_0)|\geq C\varepsilon^{1/2}$ for some $C>0,$  $(x_0,t_0)\in\mathbb{R}^n\times(0,\infty)$ in certain cases.

\subsection{Literature and main results}\label{subsec:literature}
%We first give a brief literature on homogenization of forced mean curvature flows and on quantitative homogenization of Hamilton-Jacobi equations in the periodic setting.

\medskip

Periodic homogenization of geometric motions has been studied only recently. In \cite{LS}, Lipschitz continuous correctors were found under the assumption (A3) in the periodic setting. When the gradient of the force $c$ is large and $n\geq3$, it is shown in \cite{CM} by an example in the laminar setting that homogenization may not occur. It is also shown in \cite{CM} that homogenization always takes place when $n=2$ for the level-set fronts as long as the force is positive, whose argument is 2-d arguments, showing the front is trapped in two parallel translations of an initial front in a bounded distance. Without any sign condition on $c$, Lipschitz continuous correctors were found in \cite{DKY} under the condition that $c\in C^2(\mathbb{T}^n)$ and that $\|c\|_{C^2(\mathbb{T}^n)}$ is small enough, whose part of the proof is based on \cite{CL}. A further analysis on asymptotic speeds is given in \cite{GK}. For more related works, we refer to \cite{CB, CLS, CN}. See also the recent works \cite{GLXY,MMTXY} on the curvature $G$-equation. To the best of our knowledge, quantitative homogenization of geometric motions in the periodic environment has not been treated.

Quantitative homogenization for Hamilton-Jacobi equations in the periodic setting has received a lot of attention. The rate $O(\varepsilon^{1/3})$ was obtained for first-order equations in \cite{CI}. For convex first-order Hamilton-Jacobi equations, the optimal rate of convergence $O(\varepsilon)$ was obtained very recently in \cite{TY}. We refer to \cite{HJ,MTY,TY,T} and the references therein for earlier progress in this direction.

In this paper, we obtain the rate $O(\varepsilon^{1/2})$ for periodic homogenization of forced mean curvature flow of graphs. We follow the framework of \cite{CI}, and we utilize the additional fact that there is a regular selection of correctors (see Proposition \ref{prop:cellproblem}). Based on this observation, we derive the improved rate $O(\varepsilon^{1/2})$. Also, we list an example that shows that we cannot expect a faster rate than $O(\varepsilon^{1/2})$ if we expect only the Lipschitz continuity of solutions and a regular selection of correctors. In the study of Hamilton-Jacobi equations, this improvement of rates is noted in \cite[Theorem 4.40]{Tran} and used to obtain the optimal rate of periodic homogenization of viscous Hamilton-Jacobi equations in \cite{QTY}. Our work is closely related to \cite{QTY}, \cite[Theorem 4.40]{Tran}.

\medskip

We now give the precise statements of our main results. The rate $O(\varepsilon^{1/2})$ is obtained in the following theorem.

\begin{theorem}\label{thm:provingrate}
Assume {\rm(A1)-(A3)}, and let $u_0$ be a globally Lipschitz function on $\mathbb{R}^n$ with $\|Du_0\|_{L^{\infty}(\mathbb{R}^n)}\leq N_0<+\infty$. For $\varepsilon\in(0,1]$, let $u^{\varepsilon}$ be the unique classical solution to \eqref{eq:epsilon}, and let $u$ be the unique viscosity solution to \eqref{eq:effec}. Fix $T>0$. Then, there exists a constant $C>0$ depending only on $n,\|c\|_{C^2(\mathbb{R}^n)},N_0,\delta$ such that
\begin{align*}
\|u^{\varepsilon}-u\|_{L^{\infty}(\mathbb{R}^n\times[0,T])}\leq C(1+T)\varepsilon^{1/2}. 
\end{align*}

\end{theorem}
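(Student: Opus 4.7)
My plan combines the doubling variables technique of \cite{CI} with the perturbed test function method \cite{E}, but crucially replaces the approximate cell problem of \cite{CI} by the exact regular corrector $v(\cdot,p)\in C^{2,\alpha}(\mathbb{R}^n)$ produced under (A1)--(A3). Following the philosophy of \cite[Theorem 4.40]{Tran} and \cite{QTY}, this substitution eliminates one auxiliary parameter from the optimization, and is exactly the ingredient that upgrades the classical $O(\varepsilon^{1/3})$ balance of \cite{CI} to $O(\varepsilon^{1/2})$.

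As a preliminary step I would record the equi-Lipschitz estimate: under (A1)--(A3) and $\|Du_0\|_{\infty}\leq N_0$, both $u^{\varepsilon}$ and $u$ are Lipschitz uniformly in $\varepsilon$ with constant $N_1=N_1(n,\|c\|_{C^2},N_0,\delta)$, obtained from corrector barriers as in \cite{LS,JKMT}. Consequently every gradient $p$ produced at a doubling maximizer lies in $\overline{B}_{2N_1}$, and only the cell-problem data over that ball enters the argument. For the upper estimate $u^{\varepsilon}-u\leq C(1+T)\varepsilon^{1/2}$ I would fix parameters $\eta,\alpha>0$ and a frozen gradient $p\in\overline{B}_{2N_1}$, and consider on $\mathbb{R}^n\times\mathbb{R}^n\times[0,T]\times[0,T]$
\[
\Phi_p(x,y,t,s)=u^{\varepsilon}(x,t)-u(y,s)-\frac{|x-y|^2+(t-s)^2}{2\eta}-\varepsilon v\!\left(\frac{x}{\varepsilon},p\right)-\alpha(|x|^2+t).
\]
The barrier $\alpha(|x|^2+t)$ forces a global maximizer $(\hat x,\hat y,\hat t,\hat s)$; the Lipschitz bounds yield $|\hat x-\hat y|,|\hat t-\hat s|\leq 2N_1\eta$, hence $p^{*}:=(\hat x-\hat y)/\eta\in\overline{B}_{2N_1}$. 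I would then enforce $p=p^{*}$ either by a continuity/fixed-point step on $p\mapsto(\hat x^p-\hat y^p)/\eta$, or by absorbing the mismatch $|p^{*}-p|$ through the Lipschitz (or $C^{1}$) regularity of $p\mapsto v(\cdot,p)$.

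At the maximizer I would write the viscosity subsolution inequality for $u^{\varepsilon}$ at $(\hat x,\hat t)$ against the test function $u(\hat y,\hat s)+\frac{|x-\hat y|^2+(t-\hat s)^2}{2\eta}+\varepsilon v(x/\varepsilon,p^{*})+\alpha(|x|^2+t)$, the viscosity supersolution inequality for $u$ at $(\hat y,\hat s)$ against the mirror $(y,s)$ test function, subtract, and invoke the cell-problem identity $F(D^2_y v(y,p^{*}),p^{*}+D_y v(y,p^{*}),y)=\overline{F}(p^{*})$ to cancel the leading $O(1)$ term. The residual errors come in three families: (i) an $O(\varepsilon/\eta)$ contribution from the quadratic-penalty Hessian $\eta^{-1}I$ appearing inside $\varepsilon D^2_x\phi$, controlled by the boundedness of $\partial_X F$ (the projection $I-p\otimes p/(1+|p|^2)$); (ii) an $O(\eta)$ contribution from the Lipschitz-in-$p$ dependence of $v$ and $\overline{F}$ applied to the $\eta$-sized perturbation $|p^{*}-p|+O(\eta)$; (iii) an $O(\alpha)$ term that disappears as $\alpha\downarrow 0$. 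Comparing $\Phi_{p^{*}}(\hat x,\hat y,\hat t,\hat s)$ with $\Phi_{p^{*}}(x_0,x_0,t_0,t_0)$ then gives $\sup_{\mathbb{R}^n\times[0,T]}(u^{\varepsilon}-u)\leq C(1+T)(\varepsilon/\eta+\eta)$, and the optimal balance $\eta=\sqrt{\varepsilon}$ yields the claim; the $(1+T)$ factor comes from the standard linear-in-$T$ propagation in the doubling argument. The reverse inequality $u-u^{\varepsilon}\leq C(1+T)\varepsilon^{1/2}$ is symmetric, with $\varepsilon v$ entering with the opposite sign.

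The main obstacle I foresee is the careful handling of the degenerate quasilinear second-order structure of $F$ in the viscosity inequality. The Hessian of the perturbed test function contains the singular piece $\varepsilon^{-1}D^2_y v(\hat x/\varepsilon,p^{*})$ that the cell problem is designed to absorb, plus the $O(1/\eta)$ penalty piece whose $\varepsilon$-weighted contribution is exactly what produces the $O(\varepsilon/\eta)$ loss, and must be dominated via the boundedness of $\partial_X F=I-p\otimes p/(1+|p|^2)$. A secondary but essential point is the compatibility $p=p^{*}$ at the doubling maximizer, where the Lipschitz (or better) $p$-regularity of the corrector selection $p\mapsto v(\cdot,p)$ is indispensable: without it, one loses the $O(\eta)$ bound in (ii) and is driven back to the $O(\varepsilon^{1/3})$ balance of \cite{CI}.
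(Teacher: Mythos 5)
Your proposal captures the correct overall strategy (doubling variables from \cite{CI} combined with the exact, $p$-regular corrector from the cell problem, balanced at $\eta=\varepsilon^{1/2}$), which is indeed what the paper does. However, there is a genuine gap at precisely the point you flag as "a secondary but essential point": the compatibility $p=p^{*}$ at the doubling maximizer. You propose to ``enforce $p=p^{*}$ either by a continuity/fixed-point step, or by absorbing the mismatch,'' but neither option is worked out, and the fixed-point option is not obviously available --- the map $p\mapsto(\hat x^p-\hat y^p)/\eta$ need not be continuous or single-valued, and a priori carries no contraction property. This circular dependence (the frozen slope $p$ must equal the slope produced by the maximizer, which in turn depends on $p$) is the actual technical heart of upgrading $\varepsilon^{1/3}$ to $\varepsilon^{1/2}$, and the paper resolves it with a device you do not anticipate: a \emph{three}-variable doubling. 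The paper's auxiliary function
\begin{align*}
\Phi(x,y,z,t,s)=u^{\varepsilon}(x,t)-u(y,s)-\varepsilon v\!\left(\tfrac{x}{\varepsilon},\tfrac{z-y}{\varepsilon^{1/2}}\right)
-\tfrac{|x-y|^2+|t-s|^2}{2\varepsilon^{1/2}}-\tfrac{|x-z|^2}{2\varepsilon^{1/2}}-K(t+s)-\gamma\langle x\rangle
\end{align*}
introduces a third spatial variable $z$ so that the slope fed to the corrector, $(\hat z-\hat y)/\varepsilon^{1/2}$, is selected automatically by the maximization; the extra penalty $|x-z|^2/(2\varepsilon^{1/2})$ together with the uniform Lipschitz bound on $p\mapsto v(y,p)$ forces $|\hat x-\hat z|\le C\varepsilon$, so the corrector slope agrees with the doubling slope $(\hat x-\hat y)/\varepsilon^{1/2}$ up to an $O(\varepsilon^{1/2})$ error, which is exactly the admissible size. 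Without that third variable (or an equivalent mechanism such as putting $(x-y)/\varepsilon^{1/2}$ itself into the corrector and absorbing the resulting cross-derivatives), your two-variable scheme does not close.

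Two smaller points. First, you use a quadratic localizer $\alpha(|x|^2+t)$; on $\mathbb{R}^n$ this contributes $2\alpha\hat x$ to the gradient at the maximizer, which need not vanish as $\alpha\to0$ since $|\hat x|$ can be of order $\alpha^{-1}$. The paper instead uses $\gamma\langle x\rangle$, whose gradient is bounded by $\gamma$ and Hessian by $\gamma$, so both contributions genuinely disappear as $\gamma\to0$. Second, your sketch does not address the boundary cases $\hat t=0$ or $\hat s=0$; the paper handles these with the short-time curvature estimate $\|D^2 w(\cdot,t)\|_\infty\le C/\sqrt{t}$ (giving $\|u^\varepsilon_t(\cdot,t)\|_\infty\le C(\sqrt{\varepsilon/t}+1)$ for $t<\varepsilon$), which yields $u^\varepsilon(\hat x,\hat t)-u_0(\hat x)\le C\varepsilon^{1/2}$ and closes the argument in the initial layer. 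Also note that $u^\varepsilon$ is a classical solution here (Section 2), so the subsolution test can be taken in the classical sense; and on the supersolution side the paper performs an additional sup-convolution doubling in a variable $\xi$ with parameter $\sigma\to0$ to produce a legitimate smooth test function for $u$ at $(\hat y,\hat s)$, a step your sketch also omits.
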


The next theorem shows that in the absence of a forcing term, i.e., $c\equiv0$, one obtains the rate $\Omega(\varepsilon^{1/2})$. Also, one can check that the rate is $O(\varepsilon^{1/2})$ for general Lipschitz continuous, positively 1-homogeneous initial data when $c\equiv0$.

\begin{theorem}\label{thm:example}
Let $c\equiv0$, and let $u_0(x)=|x|$. For $\varepsilon\in(0,1]$, let $u^{\varepsilon}$ be the unique classical solution to \eqref{eq:epsilon}, and let $u$ be the unique viscosity solution to \eqref{eq:effec}. Then, there exists an absolute constant $C>0$ such that
$$
|u^{\varepsilon}(0,1)-u(0,1)|\geq C\varepsilon^{1/2}.
$$
\end{theorem}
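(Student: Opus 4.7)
The plan is to exploit the exact self-similar structure of the problem. Since $c\equiv0$, the constant function $v\equiv0$ solves the cell problem with $\overline{F}(p)=0$ for every $p\in\mathbb{R}^n$; hence the effective equation \eqref{eq:effec} reduces to $u_t=0$ and $u(x,t)=u_0(x)=|x|$, giving $u(0,1)=0$. The whole task therefore becomes a quantitative lower bound on $u^\varepsilon(0,1)$.

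Next I would reduce to a single, $\varepsilon$-independent equation. Let $U\in C^{\infty}(\mathbb{R}^n\times(0,\infty))\cap C(\mathbb{R}^n\times[0,\infty))$ be the unique classical solution of
\begin{equation*}
U_s-\operatorname{tr}\left\{\left(I_n-\frac{DU\otimes DU}{1+|DU|^2}\right)D^2U\right\}=0\quad\text{in }\mathbb{R}^n\times(0,\infty),\qquad U(\cdot,0)=|\cdot|,
\end{equation*}
whose existence and instantaneous smoothing follow from Ecker--Huisken for Lipschitz entire graphs. A direct substitution shows that $(x,t)\mapsto\varepsilon U(x/\varepsilon,t/\varepsilon)$ solves \eqref{eq:epsilon} with $c\equiv0$ and the same initial datum $|x|$, so by uniqueness
\begin{equation*}
u^\varepsilon(x,t)=\varepsilon\,U(x/\varepsilon,t/\varepsilon).
\end{equation*}

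The key step is a self-similarity of $U$ inherited from the geometric scale-invariance of mean curvature flow. For any $\lambda>0$, the function $\tilde{U}(y,s):=\lambda^{-1}U(\lambda y,\lambda^2 s)$ satisfies the same graphical MCF equation (one checks that the rescaling $D\tilde{U}=DU$, $D^2\tilde{U}=\lambda D^2U$, $\tilde{U}_s=\lambda U_s$ leaves the equation invariant) with initial datum $\lambda^{-1}|\lambda y|=|y|$; by uniqueness $\tilde{U}=U$, i.e.\ $U(\lambda y,\lambda^2 s)=\lambda U(y,s)$. Choosing $\lambda=\varepsilon^{-1/2}$ and $(y,s)=(0,1)$ yields $U(0,1/\varepsilon)=\varepsilon^{-1/2}U(0,1)$, and combining with the previous step gives the identity $u^\varepsilon(0,1)=\varepsilon^{1/2}U(0,1)$.

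It remains to check that $U(0,1)>0$, which I expect to be the only subtle point. The Lipschitz function $V(y):=|y|$ is a viscosity subsolution of the graphical MCF equation: at $y\neq0$ a direct computation with $A(p):=I_n-(p\otimes p)/(1+|p|^2)$ gives $\operatorname{tr}(A(DV)D^2V)=(n-1)/|y|\geq0$, and at $y=0$ the subsolution test is vacuous because no $C^2$ function can touch $|y|$ from above at the origin. Comparison then yields $U(y,s)\geq|y|$ everywhere. Since $U(\cdot,1)$ is smooth, if $U(0,1)$ were zero the point $y=0$ would be a smooth minimum, forcing $DU(0,1)=0$; but $U(y,1)\geq|y|$ makes every directional difference quotient at the origin at least $1$, a contradiction. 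Hence $U(0,1)>0$ and we may take $C:=U(0,1)$, giving $|u^\varepsilon(0,1)-u(0,1)|=\varepsilon^{1/2}U(0,1)\geq C\varepsilon^{1/2}$. The main obstacle is cleanly setting up the self-similarity and the uniqueness statement that underpins it; the positivity is a short comparison plus instantaneous-smoothing argument.
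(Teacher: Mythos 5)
Your proposal is correct and follows essentially the same route as the paper: both exploit the parabolic self-similarity $U(\lambda y,\lambda^2 s)=\lambda U(y,s)$ of the mean curvature flow started from the cone $|x|$ to reduce the claim to the single inequality $U(0,1)>0$. The only minor difference is in establishing that positivity: the paper compares $U$ from below with the flow started from a smooth convex function $\leq|\cdot|$ vanishing at the origin, while you compare with the stationary viscosity subsolution $|y|$ and then derive a contradiction from smoothness of $U(\cdot,1)$ at a putative interior minimum at the origin --- a slightly more explicit version of the same comparison idea.
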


\subsection*{Organization of the paper}
In Section \ref{sec:wellposedness}, we state propositions about the well-posedness of \eqref{eq:epsilon}. In Section \ref{sec:provingrate}, we simplify the settings of the problem by using a priori estimates, and give a proof of Theorem \ref{thm:provingrate}. In Section \ref{sec:example}, we obtain the optimality of the rate in Theorem \ref{thm:provingrate} by proving Theorem \ref{thm:example}.

\subsection*{Notations}
The set of all $n$ by $n$ matrices is denoted by $S^n$. The matrix $I_n$ denotes the $n$ by $n$ identity matrix, and $p\otimes p$ is the matrix $\left(p^ip^j\right)_{i,j=1}$ for $p=(p^1,\cdots,p^n)^{\textrm{t}}\in\mathbb{R}^n$.

In the subsequent sections, $\langle x\rangle$ denotes the number $(1+|x|^2)^{1/2}$ for $x\in\mathbb{R}^n$. Note that $D\langle x\rangle=\frac{x}{\langle x\rangle}$, and $D^2\langle x\rangle=\frac{1}{\langle x\rangle}\left(I_n-\frac{x}{\langle x\rangle}\otimes\frac{x}{\langle x\rangle}\right)$. We also let, for $p\in\mathbb{R}^n$, $a(p)$ denote the matrix $I_n-\frac{p}{\langle p\rangle}\otimes\frac{p}{\langle p\rangle}$. For a nonzero vector $p$ in $\mathbb{R}^n$ (or in $\mathbb{R}^{n+1}$), we let $\widehat{p}=\frac{p}{|p|}$. For a square matrix $\alpha$, we let $\|\alpha\|=\sqrt{\textrm{tr}\{\alpha^t\alpha\}}$, where $\textrm{tr}\{\cdot\}$ is the trace of a given argument square matrix. Numbers $C,M>0$ denotes constants that may vary line by line, and their dependency on parameters will be specified in arguments.

For $p\in\mathbb{R}^n$, we let $a^{ij}(p)$ be the $(i,j)$-entry of the matrix $a(p)$. We define $D_pa(p)\odot q$ to be the matrix $\left(\sum_{k=1}^n\left(\frac{\partial}{\partial p^k}a^{ij}(p)\right)q^k\right)_{i,j=1,\cdots,n}$ for $q=(q^1,\cdots,q^n)\in\mathbb{R}^n$.

%\textcolor{red}{For nonnegative functions $f,g$ in $\varepsilon\in(0,\infty)$, the notation $f(\varepsilon)=\Omega(g(\varepsilon))$ as $\varepsilon\to0^+$ means that there exist a constant $C,\varepsilon_0>0$ such that $|f(\varepsilon)|\geq C|g(\varepsilon)|$ for $\varepsilon\in(0,\varepsilon_0)$.}

%%%%%%%%%%%%%%%%%%%%%%%%%%%%%%%%%%%%%%%%%%%%%%%%%%%%%%%%%%%%%%%%%%%%%
\section{Proof of Theorem \ref{thm:provingrate}}\label{sec:provingrate}
\subsection{Well-posedness of \eqref{eq:epsilon}}\label{sec:wellposedness}
We consider the forced mean curvature flow of graphs
\begin{equation}\label{eq:w}
\begin{cases}
w_t=\textrm{tr}\left\{a(Dw)D^2w\right\}+c(x)\sqrt{1+|Dw|^2} \quad &\text{ in } \mathbb{R}^n\times(0,T),\\
w(x,0)=w_0(x) \quad &\text{ on } \mathbb{R}^n.
\end{cases}
\end{equation}
We note that by change of variables, namely $u^{\varepsilon}(x,t)=\varepsilon w\left(\frac{x}{\varepsilon},\frac{t}{\varepsilon}\right)$, or $w(x,t)=\frac{1}{\varepsilon}u^{\varepsilon}(\varepsilon x,\varepsilon t)$, we can go back and forth between \eqref{eq:epsilon} and \eqref{eq:w} (when $T=+\infty$), with the change $w_0(x)=\frac{1}{\varepsilon}u_0(\varepsilon x),\ u_0(x)=\varepsilon w_0(\frac{1}{\varepsilon} x)$. We also note that Lipschitz constants on initial data are preserved through this change of variables.

We state the well-posedness of \eqref{eq:w}, which ensures that of \eqref{eq:epsilon}.

\begin{theorem}\label{thm:wellposedness}
Assume \rm(A1) and \rm(A3). Let $w_0$ be a globally Lipschitz function on $\mathbb{R}^n$ with $\|Dw_0\|_{L^{\infty}(\mathbb{R}^n)}$ $\leq N_0<+\infty$. Then, \eqref{eq:w} has a unique classical solution for all time ($T=+\infty$), and moreover, there exists $M=M(n,\|c\|_{C^2(\mathbb{R}^n)},N_0,\delta)>0$ such that
$$
\|Dw\|_{L^{\infty}(\mathbb{R}^n\times[0,\infty))}\leq M,
$$
and
\begin{align}
\|w_t(\cdot,t)\|_{L^{\infty}(\mathbb{R}^n)}\leq M\left(\frac{1}{\min\{\sqrt{t},1\}}+1\right)\quad\text{for all }t>0.\label{line:timederivativeofw}  
\end{align}
Here, $\delta>0$ is the number appearing in the condition \rm(A3).
\end{theorem}
We outline a sketch of this theorem in Appendix \ref{appendix}. The references for the theorem we refer to are \cite{EH} (when $c\equiv0$) and \cite[Appendix A]{DKY} (with a forcing term $c$).
%%%%%%%%%%%%%%

\subsection{Settings and simplifications}\label{subsec:simplification}
We assume the conditions (A1)-(A3) in the rest of this section. Let $u_0$ be a globally Lipschitz function on $\mathbb{R}^n$ with $\|Du_0\|_{L^{\infty}(\mathbb{R}^n)}\leq N_0<+\infty$, we consider \eqref{eq:epsilon}. Then, $w(x,t)=\frac{1}{\varepsilon}u^{\varepsilon}(\varepsilon x,\varepsilon t)$ solves \eqref{eq:w} with $w_0(x)=\frac{1}{\varepsilon}u_0(\varepsilon x)$ with the same Lipschitz constant $N_0$. Through this change of variables, we see that there exists  $M=M(n,\|c\|_{C^2(\mathbb{R}^n)},N_0,\delta)>0$ such that
\begin{align}
\|u_t^{\varepsilon}\|_{L^{\infty}(\mathbb{R}^n\times[\varepsilon,\infty))}+\|Du^{\varepsilon}\|_{L^{\infty}(\mathbb{R}^n\times[0,\infty))}\leq M.\label{line:gradientbound}  
\end{align}
Also, by \eqref{line:timederivativeofw}, we have
\begin{align}
\|u_t^{\varepsilon}(\cdot,t)\|_{L^{\infty}(\mathbb{R}^n)}\leq M\left(\left\|D^2w\left(\cdot,\frac{t}{\varepsilon}\right)\right\|_{L^{\infty}(\mathbb{R}^n)}+1\right)\leq M\left(\max\left\{\sqrt{\frac{\varepsilon}{t}},1\right\}+1\right)\label{line:timederivative}
\end{align}
for $t>0$. Combining \eqref{line:timederivative} with \eqref{line:gradientbound}, we see that for each compact set $K\subseteq\mathbb{R}^n\times[0,\infty)$, there exists $M=M(n,\|c\|_{C^2(\mathbb{R}^n)},N_0,\delta,K)>0$ such that $\|u^{\varepsilon}\|_{L^{\infty}(K)}\leq M$ for each $\varepsilon\in(0,1]$ by integration. By the Arzel\`a-Ascoli Theorem, $u^{\varepsilon}$ converges to $u$ locally uniformly on $\mathbb{R}^n\times[0,\infty)$ as $\varepsilon\to0$, and $u$ solves \eqref{eq:effec} (see \cite{E}), and satisfies
\begin{align}
\|u_t\|_{L^{\infty}(\mathbb{R}^n\times[0,\infty))}+\|Du\|_{L^{\infty}(\mathbb{R}^n\times[0,\infty))}\leq M.\label{line:gradientboundofu}  
\end{align}
Therefore, changing the values of $F(X,p,x)$ for $|p|> M$ does not affect the equations \eqref{eq:epsilon} and \eqref{eq:effec}.

Let $\xi\in C^{\infty}(\mathbb{R}^n,[0,1])$ be a cut-off function such that

\begin{equation*}
\xi(r)=
\begin{cases}
1 \quad &\text{ for } r\leq\sqrt{1+M^2}+1,\\
0 \quad &\text{ for } r\geq\sqrt{1+M^2}+2.
\end{cases}
\end{equation*}
Let
$$
\Tilde{F}(X,p,y)=-\textrm{tr}\left\{\left(I_n-\frac{p\otimes p}{1+|p|^2}\right)X\right\}-\Tilde{c}(y,p)\sqrt{1+|p|^2}
$$
for $(X,p,x)\in S^n\times\mathbb{R}^n\times\mathbb{R}^n$, where
$$
\Tilde{c}(y,p)=\xi\left(\sqrt{1+|p|^2}\right)c(y)+\left(1-\xi\left(\sqrt{1+|p|^2}\right)\right)c_0.
$$
Here, $c_0=\sup_{y\in\mathbb{R}^n}(c(y))$ if $c>0$, and $c_0=\inf_{y\in\mathbb{R}^n}(c(y))$ if $c<0$. Note that $c=c(y)$ is either always positive or always negative due to the assumption (A3). From this choice of the constant $c_0$, $\Tilde{c}=\Tilde{c}(y,p)$ satisfies
\begin{align*}
&\textrm{(A1)'}\ \Tilde{c}(y,p)\textrm{ is $C^2$ in $y\in\mathbb{R}^n$ and $C^{\infty}$ in $p\in\mathbb{R}^n$};\hspace{20mm}\\
&\textrm{(A2)'}\ \Tilde{c}(y+k,p)=\Tilde{c}(y,p)\textrm{ for all $y,p\in\mathbb{R}^n,\ k\in\mathbb{Z}^n$};\\
&\textrm{(A3)'}\ \Tilde{c}(y,p)^2-(n-1)|D_y\Tilde{c}(y,p)|>\delta\ \textrm{for }y,p\in\mathbb{R}^n,\ \textrm{for the same }\delta>0 \textrm{ in (A3)}.
\end{align*}
Also, it holds that $\Tilde{c}(y,p)\in[\min_{\mathbb{R}^n}(c),\max_{\mathbb{R}^n}(c)]$ for all $(y,p)\in\mathbb{R}^n\times\mathbb{R}^n$. We note that $u^{\varepsilon}$ solves \eqref{eq:epsilon} with $\Tilde{F}$ in place of $F$ as expected.

Since the modified force $\Tilde{c}=\Tilde{c}(y,p)$ satisfies the assumption (A3)', we have the following proposition (see \cite[Proposition 3.1]{LS}).

\begin{proposition}\label{prop:cellproblem}
For each $p\in\mathbb{R}^n$, there exists a unique real number, denoted by $\overline{\Tilde{F}}(p)$, such that the cell problem
\begin{equation}\label{eq:cellpb}
\begin{cases}
\Tilde{F}(D^2\Tilde{v},p+D\Tilde{v},y)=\overline{\Tilde{F}}(p) \quad &\text{ on } \mathbb{R}^n,\\
\Tilde{v}(0,p)=0. \quad &
\end{cases}
\end{equation}
has a unique $\mathbb{Z}^n$-periodic solution $\Tilde{v}=\Tilde{v}(\cdot,p)\in C^{2}(\mathbb{R}^n)$. Moreover, for each $y\in\mathbb{R}^n$, the map $p\mapsto \Tilde{v}(y,p)$ is well-defined and is a $C^2$ map from $\mathbb{R}^n$ to $\mathbb{R}$. Also, the map $p\mapsto \overline{\Tilde{F}}(p)$ is a $C^2$ map from $\mathbb{R}^n$ to $\mathbb{R}$, and $\overline{\Tilde{F}}(p)\in[\min_{\mathbb{R}^n}(-\Tilde{c}(\cdot,p))\sqrt{1+|p|^2},\max_{\mathbb{R}^n}(-\Tilde{c}(\cdot,p))\sqrt{1+|p|^2}]$ for all $p\in\mathbb{R}^n$.
\end{proposition}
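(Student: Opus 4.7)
The plan is to proceed in three stages, following the standard program for cell problems under the coercivity condition (A3)$'$, which the modified force $\tilde c$ was designed to satisfy.

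\textbf{Stage 1: existence of $(\tilde v, \overline{\tilde F})$ for fixed $p$.} I would use discounted approximation. For each $\lambda > 0$, Perron's method combined with the comparison principle (Proposition \ref{prop:cp}) produces a unique $\mathbb Z^n$-periodic viscosity solution $v^\lambda$ of $\lambda v^\lambda + \tilde F(D^2 v^\lambda, p + Dv^\lambda, y) = 0$. Since $\tilde c$ satisfies (A3)$'$, the Lions--Souganidis gradient bound from \cite{LS} gives $\|Dv^\lambda\|_{L^\infty} \le C$ uniformly in $\lambda$. The principal part then becomes uniformly elliptic (the smallest eigenvalue of $a(p+Dv^\lambda)$ equals $\langle p+Dv^\lambda\rangle^{-2}$, bounded below), so Schauder/Krylov--Safonov estimates upgrade $v^\lambda$ to $C^{2,\alpha}(\mathbb T^n)$ with uniform bounds. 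Passing to a subsequence $\lambda\to 0^+$, we extract $-\lambda v^\lambda(0)\to \overline{\tilde F}(p)$ and $v^\lambda - v^\lambda(0)\to \tilde v(\cdot,p)$ in $C^2$, and the limit pair solves the cell problem with $\tilde v(0,p)=0$.

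\textbf{Stage 2: uniqueness and the range of $\overline{\tilde F}$.} Uniqueness of the effective constant is standard: if two constants $\bar F_1 < \bar F_2$ both admitted periodic correctors, applying the strong maximum principle to a suitable shifted difference (or integrating against a positive invariant measure) yields a contradiction. Uniqueness of $\tilde v(\cdot,p)$ under the normalization $\tilde v(0,p)=0$ follows from the strong maximum principle for the linear uniformly elliptic equation satisfied by the difference of two correctors. The inclusion $\overline{\tilde F}(p) \in [-\max_y \tilde c(y,p),\, -\min_y \tilde c(y,p)]\sqrt{1+|p|^2}$ is obtained by testing the cell equation at extrema of $\tilde v$: at a maximum $y_{\max}$ we have $D\tilde v=0$ and $D^2\tilde v\preceq 0$; since $a(p)\succeq 0$, this yields $\overline{\tilde F}(p)\ge -\tilde c(y_{\max},p)\sqrt{1+|p|^2}\ge -\max_y \tilde c(y,p)\sqrt{1+|p|^2}$, and the lower bound is symmetric at a minimum.

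\textbf{Stage 3: $C^2$-dependence on $p$.} This is where I expect the main subtlety. The idea is to apply the implicit function theorem to the map $\Phi(v,\bar F,p)=\tilde F(D^2 v, p+Dv, y)-\bar F$ from $C^{2,\alpha}_{\mathrm{per},0}(\mathbb T^n)\times\mathbb R\times\mathbb R^n$ to $C^\alpha_{\mathrm{per}}(\mathbb T^n)$, where the subscript $0$ enforces $v(0)=0$. The partial derivative $D_{(v,\bar F)}\Phi$ at a known solution is $(L,-1)$, where $L$ is the linearization of $\tilde F$ in $v$: a second-order linear operator, uniformly elliptic by Stage 1, with $C^\alpha$ coefficients, whose kernel on $\mathbb T^n$ is spanned by constants. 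By the Fredholm alternative, its cokernel is one-dimensional, spanned by a positive invariant measure $\mu$; the scalar $\bar F$-direction absorbs this cokernel since $\int \mu>0$, while the normalization $v(0)=0$ removes the kernel of $L$. Hence $(L,-1)$ is an isomorphism, and since $\tilde F$ is $C^\infty$ in $p$ (both the mean-curvature part and the cut-off force $\tilde c(y,p)$), IFT yields $C^\infty$ (and in particular $C^2$) dependence of $(\tilde v,\overline{\tilde F})$ on $p$. The most delicate task is verifying the isomorphism property of $(L,-1)$, which relies on strong-maximum-principle arguments for a non-self-adjoint elliptic operator on the torus together with the explicit use of the positivity of the invariant measure.
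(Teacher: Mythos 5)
The paper does not give its own proof of this proposition; it refers to \cite[Proposition 3.1]{LS}. Your proposal reconstructs the same Lions--Souganidis program that the cited reference uses, and it is correct in outline: discounted approximation, the uniform Lipschitz bound supplied by the coercivity condition (A3)$'$, the consequent uniform ellipticity of $a(p+Dv^{\lambda})$ (whose smallest eigenvalue is indeed $\langle p+Dv^{\lambda}\rangle^{-2}$) together with Schauder estimates, passage to the limit $\lambda\to0^+$, a maximum-principle argument for uniqueness of $\overline{\tilde F}(p)$ and for the range bound, and finally the implicit function theorem in H\"older spaces on $\mathbb{T}^n$ for smooth dependence on $p$, with the Fredholm alternative and the positive invariant measure absorbing the one-dimensional cokernel of the linearization. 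Two very minor remarks: in Stage~2, for classical $C^2$ correctors the uniqueness of $\overline{\tilde F}(p)$ follows from the plain comparison at a point of maximum of $v_1-v_2$ using degenerate ellipticity (no shifting or invariant measure needed); and in Stage~3 the $C^{\infty}$-dependence you obtain from IFT is legitimate because $p\mapsto\tilde c(\cdot,p)$ is $C^{\infty}$ as a map into $C^{\alpha}(\mathbb{T}^n)$ (all $p$-derivatives multiply $c(y)-c_0\in C^2$), though only $C^2$ is claimed in the statement.
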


Before we move on to the proof of Theorem \ref{thm:provingrate}, we explain the additional property coming from the modified operator $\Tilde{F}$. For $|p|\geq M+10$, $\Tilde{c}(y,p)=c_0$, and thus, $\Tilde{v}(\cdot,p)\equiv0$. Therefore, there exists a constant $C=C(n,\|c\|_{C^2(\mathbb{R}^n)},N_0,\delta)>0$ such that
\begin{align}
\sup_{y,p\in\mathbb{R}^n}|\Tilde{v}(y,p)|+\sup_{y,p\in\mathbb{R}^n}|D_{(y,p)}\Tilde{v}(y,p)|+\sup_{y,p\in\mathbb{R}^n}\|D^2\Tilde{v}(y,p)\|\leq C.\label{ineq:globalbound}
\end{align}
Here, the gradient $D_{(y,p)}\Tilde{v}(y,p)$ is with respect to the variable $(y,p)\in\mathbb{R}^n\times\mathbb{R}^n$, and the Hessian $D^2\Tilde{v}(y,p)$ is with respect to the variable $y\in\mathbb{R}^n$.

%From now on, when $p\in\mathbb{R}^n$ is fixed, we refer to $v=v(\cdot,p)$ given in Proposition \ref{prop:cellproblem} with the normalization $v(0,p)=0$ as a (normalized) corrector associated with $p$.
%%%%%%%%%%%%%%%%%%%%%%%%%%%%%%%%%%%%%%%%%%%%%%%%%%%%%%%%%%%%%%%%%%%%%

\subsection{Proof}\label{subsec:proofprovingrate}
We prove that for a fixed $T>0$,
\begin{align}
\|u^{\varepsilon}-u\|_{L^{\infty}(\mathbb{R}^n\times[0,T])}\leq C(1+T)\varepsilon^{1/2}\label{goal:sec3}
\end{align}
for some constant $C=C(n,\|c\|_{C^2(\mathbb{R}^n)},N_0,\delta)>0$. From now on, we use $F$ and $v$ to denote $\Tilde{F}$ and $\Tilde{v}$, respectively, by abuse of notations.

\begin{proof}[Proof of Theorem \ref{thm:provingrate}]
We first show that
$$
u^{\varepsilon}(x,t)-u(x,t)\leq C(1+T)\varepsilon^{1/2}
$$
for $(x,t)\in\mathbb{R}^n\times[0,T]$. We set the auxiliary function
\begin{align*}
\Phi(x,y,z,t,s):=u^{\varepsilon}(x,t)&-u(y,s)-\varepsilon v\left(\frac{x}{\varepsilon},\frac{z-y}{\varepsilon^{1/2}}\right)\\
&-\frac{|x-y|^2+|t-s|^2}{2\varepsilon^{1/2}}-\frac{|x-z|^2}{2\varepsilon^{1/2}}-K(t+s)-\gamma\langle x\rangle,
\end{align*}
where $K,\gamma>0$ are numbers that will be chosen later. Then, the global maximum of $\Phi$ on $\mathbb{R}^{3n}\times[0,T]^2$ is attained at a certain point $(\hat{x},\hat{y},\hat{z},\hat{t},\hat{s})\in\mathbb{R}^{3n}\times[0,T]^2$.

From $\Phi(\hat{x},\hat{y},\hat{z},\hat{t},\hat{s})\geq\Phi(\hat{x},\hat{y},\hat{x},\hat{t},\hat{s})$ with \eqref{ineq:globalbound}, we have
$$
\frac{|\hat{x}-\hat{z}|^2}{2\varepsilon^{1/2}}\leq\varepsilon\left(v\left(\frac{\hat{x}}{\varepsilon},\frac{\hat{x}-\hat{y}}{\varepsilon^{1/2}}\right)-v\left(\frac{\hat{x}}{\varepsilon},\frac{\hat{z}-\hat{y}}{\varepsilon^{1/2}}\right)\right)\leq C\varepsilon^{1/2}|\hat{x}-\hat{z}|,
$$
which gives $|\hat{x}-\hat{z}|\leq C\varepsilon$. Similarly, from $\Phi(\hat{x},\hat{y},\hat{z},\hat{t},\hat{s})\geq\Phi(\hat{x},\hat{x},\hat{x},\hat{t},\hat{s})$, we get
\begin{align*}
\frac{|\hat{x}-\hat{y}|^2+|\hat{x}-\hat{z}|^2}{2\varepsilon^{1/2}}&\leq u(\hat{x},\hat{s})-u(\hat{y},\hat{s})+\varepsilon\left(v\left(\frac{\hat{x}}{\varepsilon},0\right)-v\left(\frac{\hat{x}}{\varepsilon},\frac{\hat{z}-\hat{y}}{\varepsilon^{1/2}}\right)\right)\\
&\leq C|\hat{x}-\hat{y}|+C\varepsilon^{1/2}|\hat{y}-\hat{z}|,
\end{align*}
which yields $|\hat{x}-\hat{y}|+|\hat{y}-\hat{z}|\leq C\varepsilon^{1/2}$. Lastly, $\Phi(\hat{x},\hat{y},\hat{z},\hat{t},\hat{s})\geq\Phi(\hat{x},\hat{y},\hat{z},\hat{t},\hat{t})$ and $\Phi(\hat{x},\hat{y},\hat{z},\hat{t},\hat{s})\geq\Phi(\hat{x},\hat{y},\hat{z},\hat{s},\hat{s})$ give $|\hat{t}-\hat{s}|\leq C\varepsilon^{1/2}$, which can be obtained by using \eqref{line:gradientbound}, \eqref{line:timederivative}, \eqref{line:gradientboundofu}.

Next, we show that the case $\hat{t},\hat{s}>0$ does not happen if $\gamma\leq\varepsilon^{1/2},\ K=K_1\varepsilon^{1/2}$, $K_1>0$ a constant sufficiently large. Assume first that $\hat{t},\hat{s}>0$. Then, $(x,t)\mapsto\Phi(x,\hat{y},\hat{z},t,\hat{s})$ attains a maximum at $(\hat{x},\hat{t})$, and thus, by the maximum principle,
\begin{align*}
Du^{\varepsilon}(\hat{x},\hat{t})&=Dv\left(\frac{\hat{x}}{\varepsilon},\frac{\hat{z}-\hat{y}}{\varepsilon^{1/2}}\right)+\frac{\hat{x}-\hat{y}}{\varepsilon^{1/2}}+\frac{\hat{x}-\hat{z}}{\varepsilon^{1/2}}+\gamma\frac{\hat{x}}{\langle\hat{x}\rangle},\\
u_t^{\varepsilon}(\hat{x},\hat{t})&\geq K+\frac{\hat{t}-\hat{s}}{\varepsilon^{1/2}},\\
D^2u^{\varepsilon}(\hat{x},\hat{t})&\leq\frac{1}{\varepsilon}D^2v\left(\frac{\hat{x}}{\varepsilon},\frac{\hat{z}-\hat{y}}{\varepsilon^{1/2}}\right)+\frac{2}{\varepsilon^{1/2}}I_n+\frac{\gamma}{\langle\hat{x}\rangle}\left(I_n-\frac{\hat{x}}{\langle\hat{x}\rangle}\otimes\frac{\hat{x}}{\langle\hat{x}\rangle}\right).
\end{align*}
From $u_{t}^{\varepsilon}(\hat{x},\hat{t})+F\left(\varepsilon D^2u^{\varepsilon}(\hat{x},\hat{t}),Du^{\varepsilon}(\hat{x},\hat{t}),\frac{\hat{x}}{\varepsilon}\right)=0$, we obtain
\begin{align}
0&\geq K+\frac{\hat{t}-\hat{s}}{\varepsilon^{1/2}}+F\left(D^2v\left(\frac{\hat{x}}{\varepsilon},\frac{\hat{z}-\hat{y}}{\varepsilon^{1/2}}\right)+2\varepsilon^{1/2}I_n\right.\nonumber\\
&\qquad\qquad\qquad\qquad\qquad\qquad\qquad\left.+\frac{\varepsilon\gamma}{\langle\hat{x}\rangle}\left(I_n-\frac{\hat{x}}{\langle\hat{x}\rangle}\otimes\frac{\hat{x}}{\langle\hat{x}\rangle}\right),Du^{\varepsilon}(\hat{x},\hat{t}),\frac{\hat{x}}{\varepsilon}\right)\nonumber\\
&=K+\frac{\hat{t}-\hat{s}}{\varepsilon^{1/2}}+F\left(D^2v\left(\frac{\hat{x}}{\varepsilon},\frac{\hat{z}-\hat{y}}{\varepsilon^{1/2}}\right),Du^{\varepsilon}(\hat{x},\hat{t}),\frac{\hat{x}}{\varepsilon}\right)\nonumber\\
&\hspace{2.8cm}-\textrm{tr}\left\{a\left(Du^{\varepsilon}(\hat{x},\hat{t})\right)\left(2\varepsilon^{1/2}I_n+\frac{\varepsilon\gamma}{\langle\hat{x}\rangle}\left(I_n-\frac{\hat{x}}{\langle\hat{x}\rangle}\otimes\frac{\hat{x}}{\langle\hat{x}\rangle}\right)\right)\right\}.\nonumber
\end{align}
Since $0\leq a(Du^{\varepsilon}(\hat{x},\hat{t}))\leq I_n$ and $0\leq 2\varepsilon^{1/2}I_n+\frac{\varepsilon\gamma}{\langle\hat{x}\rangle}\left(I_n-\frac{\hat{x}}{\langle\hat{x}\rangle}\otimes\frac{\hat{x}}{\langle\hat{x}\rangle}\right)\leq(2\varepsilon^{1/2}+\varepsilon\gamma)I_n$, we obtain
\begin{align}
0\geq K+\frac{\hat{t}-\hat{s}}{\varepsilon^{1/2}}+F\left(D^2v\left(\frac{\hat{x}}{\varepsilon},\frac{\hat{z}-\hat{y}}{\varepsilon^{1/2}}\right),Du^{\varepsilon}(\hat{x},\hat{t}),\frac{\hat{x}}{\varepsilon}\right)-n\varepsilon^{1/2}(2+\gamma\varepsilon^{1/2}).\label{subsolntest0}
\end{align}

Besides, $v=v\left(\cdot,\frac{\hat{z}-\hat{y}}{\varepsilon^{1/2}}\right)$ solves
$$
F\left(D^2v,\frac{\hat{z}-\hat{y}}{\varepsilon^{1/2}}+Dv,y\right)=\overline{F}\left(\frac{\hat{z}-\hat{y}}{\varepsilon^{1/2}}\right).
$$
Since $\left|Du^{\varepsilon}(\hat{x},\hat{t})-\left(\frac{\hat{z}-\hat{y}}{\varepsilon^{1/2}}+Dv\right)\right|=\left|\frac{2(\hat{x}-\hat{z})}{\varepsilon^{1/2}}+\gamma\frac{\hat{x}}{\langle \hat{x}\rangle}\right|\leq C(\varepsilon^{1/2}+\gamma)$,
\begin{align}
&\left|F\left(D^2v\left(\frac{\hat{x}}{\varepsilon},\frac{\hat{z}-\hat{y}}{\varepsilon^{1/2}}\right),Du^{\varepsilon}(\hat{x},\hat{t}),\frac{\hat{x}}{\varepsilon}\right)\right.\nonumber\\
&\hspace{3cm}\left.-F\left(D^2v\left(\frac{\hat{x}}{\varepsilon},\frac{\hat{z}-\hat{y}}{\varepsilon^{1/2}}\right),\frac{\hat{z}-\hat{y}}{\varepsilon^{1/2}}+Dv\left(\frac{\hat{x}}{\varepsilon},\frac{\hat{z}-\hat{y}}{\varepsilon^{1/2}}\right),\frac{\hat{x}}{\varepsilon}\right)\right|\nonumber\\
&\leq C(\varepsilon^{1/2}+\gamma)\left(\left\|D^2v\left(\frac{\hat{x}}{\varepsilon},\frac{\hat{z}-\hat{y}}{\varepsilon^{1/2}}\right)\right\|+\max_{y\in\mathbb{R}^n}|c(y)|\right)\nonumber\\
&\leq C(\varepsilon^{1/2}+\gamma)\label{subsolntest1}.
\end{align}
Here, we are using the property of $a$ that for $p,q\in\mathbb{R}^n$,
\begin{align*}
\|a(p)-a(q)\|
%=&\left\|\frac{p}{\langle p\rangle}\otimes\frac{p}{\langle p\rangle}-\frac{q}{\langle q\rangle}\otimes\frac{q}{\langle q\rangle}\right\|\\
%\leq&\left\|\left(\frac{p}{\langle p\rangle}-\frac{q}{\langle q\rangle}\right)\otimes\frac{p}{\langle p\rangle}\right\|+\left\|\frac{q}{\langle q\rangle}\otimes\left(\frac{p}{\langle p\rangle}-\frac{q}{\langle q\rangle}\right)\right\|\\
\leq C|p-q|,
\end{align*}
and using Cauchy-Schwarz's inequality $\left|\textrm{tr}\{\alpha\beta^t\}\right|\leq\|\alpha\|\|\beta\|$ for two square matrices $\alpha,\beta$ of the same size. Therefore, combining \eqref{subsolntest0} and \eqref{subsolntest1}, we have
\begin{align}
0\geq K+\frac{\hat{t}-\hat{s}}{\varepsilon^{1/2}}+\overline{F}\left(\frac{\hat{z}-\hat{y}}{\varepsilon^{1/2}}\right)-C(\varepsilon^{1/2}+\gamma).\label{subsolntest}
\end{align}

Now, we fix $(\hat{x},\hat{z},\hat{t})$. For $\sigma>0$, we let
$$
\Psi(y,\xi,s):=u(y,s)+\varepsilon v\left(\frac{\hat{x}}{\varepsilon},\frac{\hat{z}-\xi}{\varepsilon^{1/2}}\right)+\frac{|\hat{x}-y|^2+|\hat{t}-s|^2}{2\varepsilon^{1/2}}+\frac{|y-\xi|^2}{2\sigma}+Ks.
$$
Then, $\Psi$ attains a minimum at $(y_{\sigma},\xi_{\sigma},s_{\sigma})\in\mathbb{R}^{2n}\times[0,T]$, and $(y_{\sigma},\xi_{\sigma},s_{\sigma})\to(\hat{y},\hat{y},\hat{t})$ as $\sigma\to0$ upto a subsequence. From $\Psi(y_{\sigma},\xi_{\sigma},s_{\sigma})\leq\Psi(y_{\sigma},y_{\sigma},s_{\sigma})$, we get
$$
\frac{|y_{\sigma}-\xi_{\sigma}|^2}{2\sigma}\leq\varepsilon\left(v\left(\frac{\hat{x}}{\varepsilon},\frac{\hat{z}-y_{\sigma}}{\varepsilon^{1/2}}\right)-v\left(\frac{\hat{x}}{\varepsilon},\frac{\hat{z}-\xi_{\sigma}}{\varepsilon^{1/2}}\right)\right)\leq C\varepsilon^{1/2}|y_{\sigma}-\xi_{\sigma}|,
$$
which implies $|y_{\sigma}-\xi_{\sigma}|\leq C\varepsilon^{1/2}\sigma$. Now, $(y,s)\mapsto\Psi(y,\xi_{\sigma},s)$ attains a minimum at $(y_{\sigma},s_{\sigma})$, and by the viscosity supersolution test for $u$ at $(y_{\sigma},s_{\sigma})$, we obtain
$$
-K-\frac{s_{\sigma}-\hat{t}}{\varepsilon^{1/2}}+\overline{F}\left(-\frac{y_{\sigma}-\hat{x}}{\varepsilon^{1/2}}-\frac{y_{\sigma}-\xi_{\sigma}}{\sigma}\right)\geq0.
$$
Letting $\sigma\to0$, we get
\begin{align}
-K+\frac{\hat{t}-\hat{s}}{\varepsilon^{1/2}}+\overline{F}\left(\frac{\hat{x}-\hat{y}}{\varepsilon^{1/2}}\right)\geq -C\varepsilon^{1/2}.\label{supersolntest}
\end{align}

Combining \eqref{subsolntest} and \eqref{supersolntest}, we obtain
$$
2K\leq C\varepsilon^{1/2}+C\gamma.
$$
For the choices $\gamma\leq\varepsilon^{1/2},\ K=K_1\varepsilon^{1/2}$, $K_1>0$ a constant sufficiently large, we see that this is a contradiction.

Therefore, we have either $\hat{t}=0$ or $\hat{s}=0$. In case when $\hat{s}=0$, we have $\hat{t}\leq C\varepsilon^{1/2}$, and therefore, we obtain $u^{\varepsilon}(\hat{x},\hat{t})-u_0(\hat{x})=\int_0^{\hat{t}}u_t^{\varepsilon}(\hat{x},s)ds\leq C\varepsilon^{1/2}$ by using \eqref{line:timederivative}. Consequently,
$$
\Phi(\hat{x},\hat{y},\hat{z},\hat{t},\hat{s})\leq u^{\varepsilon}(\hat{x},\hat{t})-u(\hat{y},\hat{s})-\varepsilon v\left(\frac{\hat{x}}{\varepsilon},\frac{\hat{z}-\hat{y}}{\varepsilon^{1/2}}\right)\leq C\varepsilon^{1/2}.
$$
In case when $\hat{t}=0$, the above follows from the fact that $\|u_t\|_{L^{\infty}(\mathbb{R}^n\times[0,\infty))}\leq C$.

Since $\Phi(x,x,x,t,t)\leq C\varepsilon^{1/2}$ for all $(x,t)\in\mathbb{R}^n\times[0,T]$, it holds that
$$
u^{\varepsilon}(x,t)-u(x,t)\leq C\varepsilon^{1/2}+\varepsilon v\left(\frac{x}{\varepsilon},0\right)+2K_1\varepsilon^{1/2}t+\gamma\langle x\rangle.
$$
By letting $\gamma\to0$, we obtain the upper bound
$$
u^{\varepsilon}(x,t)-u(x,t)\leq C(1+T)\varepsilon^{1/2}
$$
for all $(x,t)\in\mathbb{R}^n\times[0,T]$.

To prove the lower bound
$$
u^{\varepsilon}(x,t)-u(x,t)\geq -C(1+T)\varepsilon^{1/2}
$$
for all $(x,t)\in\mathbb{R}^n\times[0,T]$, we alternatively consider another auxiliary function
\begin{align*}
\Phi_1(x,y,z,t,s):=u^{\varepsilon}(x,t)&-u(y,s)-\varepsilon v\left(\frac{x}{\varepsilon},\frac{z-y}{\varepsilon^{1/2}}\right)\\
&+\frac{|x-y|^2+|t-s|^2}{2\varepsilon^{1/2}}+\frac{|x-z|^2}{2\varepsilon^{1/2}}+K(t+s)+\gamma\langle x\rangle.
\end{align*}
Then, we follow a similar argument as the above to obtain the lower bound.
\end{proof}

%%%%%%%%%%%%%%%%%%%%%%%%%%%%%%%%%%%%%%%%%%%%%%%%%%%%%%%%%%%%%%%%%%%%%

\section{Proof of Theorem 1.2}\label{sec:example}
In this section, we prove Theorem \ref{thm:example}. We let $c(x)=0$, $u_0(x)=|x|$ for $x\in\mathbb{R}^n$. Note that $u(x,t)=|x|$ is the unique viscosity solution to the effective equation \eqref{eq:effec} with $\overline{F}\equiv0$.

\begin{proof}[Proof of Theorem \ref{thm:example}]
Let $\overline{w}=\overline{w}(x,t)$ be the unique Lipschitz classical solution to the mean curvature flow
\begin{equation}\label{eq:csf}
\begin{cases}
\overline{w}_t=\textrm{tr}\left\{a(D\overline{w})D^2\overline{w}\right\} \quad &\text{ in } \mathbb{R}^n\times(0,\infty),\\
\overline{w}(x,0)=|x| \quad &\text{ on } \mathbb{R}^n.
\end{cases}
\end{equation}
As \eqref{eq:csf} enjoys the comparison principle among Lipschitz solutions, we have that $\frac{1}{\lambda}\overline{w}(\lambda x,\lambda^2 t)=\overline{w}(x,t)$ for any $\lambda>0$. Therefore, $\overline{w}(0,t)=\sqrt{t}\overline{w}(0,1)$ for any $t\geq0$, and the fact that $\overline{w}(0,1)>0$ can be proved by taking a barrier function from below whose initial data is a smooth convex function that passes through the origin and is less or equal to the function $w_0(x)=|x|$.

Since $u^{\varepsilon}(x,t)=\varepsilon \overline{w}\left(\frac{x}{\varepsilon},\frac{t}{\varepsilon}\right)$ and $u(x,t)=|x|$, we have
\begin{align*}
&u^{\varepsilon}(0,t)-u(0,t)
=\varepsilon \overline{w}\left(0,\frac{t}{\varepsilon}\right)=\overline{w}(0,1)\sqrt{t\varepsilon}>0.
\end{align*}
By taking $t=1,\ C=\overline{w}(0,1)>0$, we complete the proof of Theorem \ref{thm:example}.
\end{proof}

\begin{comment}
In this section, we prove Theorem \ref{thm:example}. We assume $n=1$ for simplicity, but the whole proofs in this section extend for general dimensions easily with $u(x_1,\cdots,x_n)=u(x_1)$ by abuse of notations. We let $c(x)=1,\ u_0(x)=-|x|$ for $x\in\mathbb{R}$ throughout this section.

We start the proof of Theorem \ref{thm:example}. Note that $u(x,t)=-|x|+\sqrt{2}t$ is the unique viscosity solution to
\begin{equation}\label{eq:effecexample}
\begin{cases}
u_t=\sqrt{1+u_x^2} \quad &\text{ in } \mathbb{R}\times(0,\infty),\\
u(x,0)=-|x| \quad &\text{ on } \mathbb{R}.
\end{cases}
\end{equation}

Also, from \cite{I}, we have that $|\overline{w}_x(x,t)|<1$ for all $(x,t)\in\mathbb{R}^n\times(0,\infty)$, which yields the fact that $\overline{w}(x,t)+\sqrt{2}t$ is a supersolution to \eqref{eq:w} with $w_0(x)=-|x|$. Thus, the unique classical solution $w=w(x,t)$ to \eqref{eq:w} with $w_0(x)=-|x|$ satisfies that $\overline{w}(x,t)+\sqrt{2}t\geq w(x,t)$ by Proposition \ref{prop:cp}.
\end{comment}

%%%%%%%%%%%%%%%%%%%%%%%%%%%%%%%%%%%%%%%%%%%%%%%%%%%%%%%%%%%%%%%%%%%%%%%%%%%%%%%%%%%%%%%%%

%\section*{Appendix}\label{sec:appendix}

%%%%%%%%%%%%%%%%%%%%%%%%%%%%%%%%%%%%%%%%%%%%%%%%%%%%%%%%%%%%%%%%%%%%%%%%%%%%%%%%%%

\appendix
\section{Proof of Theorem \ref{thm:wellposedness}}\label{appendix}
We prove Theorem \ref{thm:wellposedness} in this appendix. We will separate the steps into Propositions \ref{prop:shorttime}, \ref{prop:timederivative}, \ref{prop:gradientestimate}, whose statements are about the estimates of gradients, Hessians and time derivatives.% As readily known in the literature (see \cite{CM,CIL,DKY,EH,JKMT}), we outline a short sketch of each proposition.

We state the short-time existence of classical solutions to \eqref{eq:w}. We skip the proof as known in the literature. The uniqueness follows from the standard comparison principle, for which we refer to \cite{CIL}. See \cite{BBBL} for more general results in this direction. For the existence with gradient and Hessian estimates, we refer to \cite{EH} (in the absence of a forcing) and to \cite[Appendix A]{DKY} (when with a $C^2$ forcing term).

\begin{proposition}\label{prop:shorttime}
Let $w_0$ be a globally Lipschitz function on $\mathbb{R}^n$ with $\|Dw_0\|_{L^{\infty}(\mathbb{R}^n)}$ $\leq N_0<+\infty$. Then, there exists $T^*=T^*(\|c\|_{C^2(\mathbb{R}^n)},N_0)>0$ such that \eqref{eq:w} with $T=T^*$ has a unique classical solution $w=w(x,t)$. Moreover, for each $T\in(0,T^*)$, there exist $N=N(\|c\|_{C^2(\mathbb{R}^n)},N_0,T)>0$ and $C=C(\|c\|_{C^2(\mathbb{R}^n)},N_0,T)$ $>0$ such that
$$
\|Dw\|_{L^{\infty}(\mathbb{R}^n\times[0,T])}\leq N,
$$
and
$$
\|D^2w(\cdot,t)\|_{L^{\infty}(\mathbb{R}^n)}\leq \frac{C}{\sqrt{t}}
$$
for $t\in(0,T]$.
\end{proposition}

Next, we state and prove a priori time derivative estimates based on the maximum principle. See \cite[Lemma 3.1]{JKMT}.

\begin{proposition}\label{prop:timederivative}
Let $w_0$ be a globally Lipschitz function on $\mathbb{R}^n$ with $\|Dw_0\|_{L^{\infty}(\mathbb{R}^n)}$ $\leq N_0<+\infty$. Let $T^*=T^*(\|c\|_{C^2(\mathbb{R}^n)},N_0)>0$ be chosen such that \eqref{eq:w} with $T=T^*$ has the unique classical solution $w=w(x,t)$. Then, for any $\tau\in(0,T^*)$, it holds that
$$
\|w_t\|_{L^{\infty}(\mathbb{R}^n\times[\tau,T^*))}\leq\|w_t(\cdot,\tau)\|_{L^{\infty}(\mathbb{R}^n)}<+\infty.
$$
\end{proposition}

\begin{proof}[Proof of Proposition \ref{prop:timederivative}]
Let $T\in(\tau,T^*)$. Then, by Proposition \ref{prop:shorttime}, there exist $N=N(\|c\|_{C^2(\mathbb{R}^n)},N_0,T)>0$ and $C=C(\|c\|_{C^2(\mathbb{R}^n)},N_0,T)>0$ such that
$$
\|Dw\|_{L^{\infty}(\mathbb{R}^n\times[0,T])}\leq N,
$$
and
$$
\|D^2w(\cdot,t)\|_{L^{\infty}(\mathbb{R}^n)}\leq \frac{C}{\sqrt{t}}
$$
for $t\in(\tau,T]$. Therefore, by the equation \eqref{eq:w}, we see that $\|w_t\|_{L^{\infty}(\mathbb{R}^n\times[\tau,T])}\leq K=K(\|c\|_{C^2(\mathbb{R}^n)},N_0,\tau,T)$.

We aim to prove
$$
\sup_{\mathbb{R}^n\times[\tau,T]}w_t\leq\sup_{\mathbb{R}^n}w_t(\cdot,\tau).
$$
Suppose for the contrary that there exists $(x_0,t_0)\in\mathbb{R}^n\times(\tau,T]$ such that
$$
w_t(x_0,t_0)>\sup_{\mathbb{R}^n}w_t(\cdot,\tau).
$$
Then there would exist a number $\lambda\in(0,1)$ such that
$$
w_t(x_0,t_0)-\lambda t_0>\sup_{\mathbb{R}^n}w_t(\cdot,\tau)-\lambda \tau.
$$

We run Bernstein method now with $\Phi(x,t):=w_t(x,t)-\lambda t$. Let $\Phi^*(t):=\sup_{\mathbb{R}^n}\Phi(\cdot,t)$ for each $t\in[\tau,T]$. Then $\Phi^*(t_0)>\Phi^*(\tau)$. Fix a sequence $\{\varepsilon_j\}_j$ of positive numbers that converges to $0$ as $j\to\infty$. For each $t\in[\tau,T]$, let $x_j(t)$ be a maximizer of $\Phi_j(x,t)=\Phi(x,t)-\varepsilon_j|x|^2$. Then, $\Phi(x_j(t),t)\to\Phi^*(t),\ D\Phi(x_j(t),t)\to0$ as $j\to\infty$, and $\limsup_{j\to\infty}D^2\Phi(x_j(t),t)\leq0$ in the sense that $\limsup_{j\to\infty}(D^2\Phi(x_j(t),t)v)\cdot v\leq0$ for any $v\in\mathbb{R}^n$.

Note that $\{t\in[\tau,T]:\Phi^*(t)=\sup_{[\tau,T]}\Phi^*(\cdot)\}$ is a closed subinterval of $[\tau,T]$ not containing $\tau$. Consequently, there exists $t^*\in(\tau,T]$ such that $\Phi^*(t^*)=\sup_{[\tau,T]}\Phi^*(\cdot),$ $\Phi^*(t)<\Phi^*(t^*)$ for all $t\in[\tau,t^*)$, and thus that $\liminf_{j\to\infty}\Phi_t(x_j(t^*),t^*)\geq0$.  

Differentiating the first line of \eqref{eq:w} in $t$, we obtain
$$
(w_t)_t-\textrm{tr}\{a(Dw)D^2w_t\}=\textrm{tr}\{a(Dw)_tD^2w\}+c\frac{Dw\cdot Dw_t}{\sqrt{1+|Dw|^2}}.
$$
Also, at $(x,t)\in\mathbb{R}^n\times[\tau,T]$,
\begin{align*}
\textrm{tr}\{a(Dw)_tD^2w\}&=\textrm{tr}\{(D_pa(Dw)\odot Dw_t)D^2w\}\\
&\leq\frac{4n^3}{\sqrt{1+|Dw|^2}}|Dw_t|\|D^2w\|\leq C(n,\|c\|_{C^2(\mathbb{R}^n)},N_0,\tau)|Dw_t|
\end{align*}
for some constant $C=C(n,\|c\|_{C^2(\mathbb{R}^n)},N_0,\tau)>0$ depending only on its argument. Here, we have used the fact that $\left|\frac{\partial}{\partial p_k}a^{ij}(p)\right|\leq\frac{4}{\sqrt{1+|p|^2}}$ for all $p\in\mathbb{R}^n$. Also,
$$
c\frac{Dw\cdot Dw_t}{\sqrt{1+|Dw|^2}}\leq\|c\|_{L^{\infty}(\mathbb{R}^n)}|Dw_t|.
$$
Therefore, evaluated at $(x_j(t^*),t^*)$ in the following limit,
\begin{align*}
0&\leq\liminf_{j\to\infty}\left(\Phi_t-\textrm{tr}\left\{a(Dw)D^2\Phi\right\}\right)\\
&\leq\liminf_{j\to\infty}\left(-\lambda+(w_t)_t-\textrm{tr}\{a(Dw)D^2w_t\}\right)\\
&\leq-\lambda+\liminf_{j\to\infty}C(n,\|c\|_{C^2(\mathbb{R}^n)},N_0,\tau)|Dw_t|\\
&=-\lambda,
\end{align*}
a contradiction.

The statement $\inf_{\mathbb{R}^n\times[\tau,T]}w_t\geq\inf_{\mathbb{R}^n}w_t(\cdot,\tau)$ can be verified similarly, and $T\in(\tau,T^*)$ can be chosen arbitrarily. Therefore, we complete the proof.
\end{proof}

We state and prove a priori gradient estimates. The point of the following proposition is to remove the dependency on $T\in(0,T^*)$ in the estimate of Proposition \ref{prop:shorttime}. We refer to \cite{LS,JKMT} regarding gradient estimates from the coercivity condition (A3).

\begin{proposition}\label{prop:gradientestimate}
Let $w_0$ be a globally Lipschitz function on $\mathbb{R}^n$ with $\|Dw_0\|_{L^{\infty}(\mathbb{R}^n)}$ $\leq N_0<+\infty$. Let $T^*=T^*(\|c\|_{C^2(\mathbb{R}^n)},N_0)>0$ be chosen such that \eqref{eq:w} with $T=T^*$ has the unique classical solution $w=w(x,t)$. Then, for any $\tau\in(0,T^*)$, there exists $M=M(n,\|c\|_{L^{\infty}(\mathbb{R}^n)},\|w_t(\cdot,\tau)\|_{L^{\infty}(\mathbb{R}^n)},\delta)>0$
such that
$$
\|Dw\|_{L^{\infty}(\mathbb{R}^n\times[\tau,T^*))}\leq\max\{\|Dw(\cdot,\tau)\|_{L^{\infty}(\mathbb{R}^n)},M\}.
$$
Here, $\delta>0$ is the number appearing in the condition \rm(A3).
\end{proposition}

\begin{proof}[Proof of Proposition \ref{prop:gradientestimate}]
Let $T\in(\tau,T^*)$. By Proposition \ref{prop:shorttime}, there exists $N=N(\|c\|_{C^2(\mathbb{R}^n)},N_0,T)>0$ such that
\begin{align}\label{gradientestimateintime}
\|Dw\|_{L^{\infty}(\mathbb{R}^n\times[\tau,T])}\leq N. 
\end{align}
The goal of this proof is to make this estimate independent of $T\in(\tau,T^*)$.

We now run Bernstein method with $\Phi(x,t):=z(x,t)$. Let $\Phi^*(t):=\sup_{\mathbb{R}^n}w(\cdot,t)$ for $t\in[\tau,T]$. Let $\{\varepsilon_j\}_j$ be a sequence of positive numbers that converges to $0$ as $j\to\infty$. For each $t\in[\tau,T]$, a maximizer $\{x_j(t)\}_j$ of $\Phi_j(x,t):=\Phi(x,t)-\varepsilon_j|x|^2$ satisfies that $\Phi(x_j(t),t)\to\Phi^*(t),\ D\Phi(x_j(t),t)\to0$ as $j\to\infty$, and that $\limsup_{j\to\infty}D^2\Phi(x_j(t),t)\leq0$. Here, we are using the estimate \eqref{gradientestimateintime}.

If $\{t\in[\tau,T]:\Phi^*(t)=\sup_{[\tau,T]}\Phi^*(\cdot)\}$ contains $\tau$, we obtain the conclusion. We assume the other case so that there exists $t_1\in(\tau,T]$ such that $\Phi^*(t)<\Phi^*(t_1)=\sup_{[\tau,T]}\Phi^*(\cdot)$ for all $t\in[\tau,t_1)$. Then, it holds that $\liminf_{j\to\infty}\Phi_t(x_j(t_1),t_1)\geq0$.

We differentiate the first line of \eqref{eq:w} in $x_k$ and multiply by $w_{x_k}$ and then sum over $k=1,\cdots,n.$ We get, as a result,
\begin{align}
zz_t-z\textrm{tr}\{a(Dw)D^2z\}=z\textrm{tr}\{(D_pa(Dw)\odot Dz)D^2w\}-\textrm{tr}\{(a(Dw)D^2w)^2\}\nonumber\\
\qquad\qquad\qquad\qquad\qquad\qquad\qquad+zDc\cdot Dw+cDw\cdot Dz.\label{linearization}
\end{align}

We estimate the term $\textrm{tr}\{(a(Dw)D^2w)^2\}$. Using the fact that $Dz=z^{-1}D^2wDw$, we see that
\begin{align}
&\textrm{tr}\{(a(Dw)D^2w)^2\}\nonumber\\
&=\textrm{tr}\{a(Dw)D^2wI_nD^2w\}-\textrm{tr}\{a(Dw)D^2w\frac{Dw}{z}\otimes\frac{Dw}{z}D^2w\}\nonumber\\
&=\textrm{tr}\{a(Dw)(D^2w)^2\}-\textrm{tr}\{a(Dw)Dz\otimes Dz\}.\label{mainterm1}
\end{align}
Recall Cauchy-Schwarz's inequality; for two square matrices $\alpha,\beta$ of the same size, we have
$$
\textrm{tr}\{\alpha\beta^t\}^2\leq\|\alpha\|^2\|\beta\|^2
$$
Assume $n\geq2$. We put $\alpha=(a(Dw))^{1/2}D^2w,\ \beta=(a(Dw))^{1/2}$ to obtain
\begin{align}
&\textrm{tr}\{a(Dw)(D^2w)^2\}\nonumber\\
&\geq\frac{1}{n-1+\frac{1}{z^2}}\textrm{tr}\{a(Dw)D^2w\}^2\nonumber\\
&\geq\frac{1}{n-1}(w_t-cz)^2-\frac{1}{(n-1)^2z^2}(w_t-cz)^2\nonumber\\
&\geq\frac{c^2}{n-1}z^2-\frac{2\|w_t(\cdot,\tau)\|_{L^{\infty}(\mathbb{R}^n)}\|c\|_{L^{\infty}(\mathbb{R}^n)}}{n-1}z-C\label{mainterm2}
\end{align}
for some constant $C=C(n,\|c\|_{L^{\infty}(\mathbb{R}^n)},\|w_t(\cdot,\tau)\|_{L^{\infty}(\mathbb{R}^n)})>0$. Here, we have used Proposition \ref{prop:timederivative}.

Note that
$$
z\textrm{tr}\{(D_pa(Dw)\odot Dz)D^2w\}\leq 4n^3|Dz|\|D^2w\|_{L^{\infty}(\mathbb{R}^n\times[\tau,T])}\leq C|Dz|
$$
for some constant $C=C(n,\|c\|_{C^2(\mathbb{R}^n)},N_0,\tau,T)>0$ from the Hessian estimate in Proposition \ref{prop:shorttime}. We also have used the fact that $\left|\frac{\partial}{\partial p^k}a^{ij}(p)\right|\leq\frac{4}{\sqrt{1+|p|^2}}$ for $p\in\mathbb{R}^n$.

From \eqref{linearization}, \eqref{mainterm1}, \eqref{mainterm2}, we have
\begin{align*}
&zz_t-z\textrm{tr}\{a(Dw)D^2w\}\\
&\leq C(n,\|c\|_{C^2(\mathbb{R}^n)},N_0,\tau,T)|Dz|-\left(\frac{c^2}{n-1}-|Dc|\right)z^2\\
&\ \ \ +\frac{2\|w_t(\cdot,\tau)\|_{L^{\infty}(\mathbb{R}^n)}\|c\|_{L^{\infty}(\mathbb{R}^n)}}{n-1}z\ +|Dz|^2+\|c\|_{L^{\infty}(\mathbb{R}^n)}|Dz|z+C
\end{align*}
for some constant $C=C(n,\|c\|_{L^{\infty}(\mathbb{R}^n)},\|w_t(\cdot,\tau)\|_{L^{\infty}(\mathbb{R}^n)})>0$. Evaluate at $(x_j(t_1),t_1)$ and let $j\to\infty$ to obtain
\begin{align*}
0\leq-\delta \Phi^*(t_1)^2+C\Phi^*(t_1)+C
\end{align*}
for some constant $C=C(n,\|c\|_{L^{\infty}(\mathbb{R}^n)},\|w_t(\cdot,\tau)\|_{L^{\infty}(\mathbb{R}^n)})>0$ (taking a larger one than the previous lines if necessary) depending only on its arguments. This completes the proof when $n\geq2$. In the case of $n=1$, the estimate can be carried out similarly.
\end{proof}

Combining Propositions \ref{prop:shorttime}, \ref{prop:timederivative}, \ref{prop:gradientestimate}, we obtain the long-time existence, proved in the following.

\begin{proof}[Proof of Theorem \ref{thm:wellposedness}]
The uniqueness is standard \cite{CIL,BBBL}. We prove the existence of classical solutions for all time.

Let $T^*=T^*(\|c\|_{C^2(\mathbb{R}^n)},N_0)>0$ be chosen as in Proposition \ref{prop:shorttime}. Fix $\tau_0=\frac{1}{2}T^*$. Tracking the dependency on parameters using Propositions \ref{prop:shorttime}, \ref{prop:timederivative}, \ref{prop:gradientestimate}, we see that there exists $M=M(n,\|c\|_{C^2(\mathbb{R}^n)},N_0,\delta)>0$ such that
$$
\|Dw\|_{L^{\infty}(\mathbb{R}^n\times[\tau_0,T^*))}\leq M.
$$
Let $\tau_1=T^*-\varepsilon\in(\tau_0,T^*)$. Starting from $w(\cdot,\tau_1)$ at $t=\tau_1$, seen as an initial data, we can extend the solution on time interval $[\tau_1,\tau_1+\frac{1}{(C_1+1)\sqrt{1+M^2}})$ (see the proof of Proposition \ref{prop:shorttime} for this explicit expression). As $\varepsilon>0$ can be arbitrarily small, the solution exists on time interval $[0,T_1^*)$ with $T_1^*=T^*+\frac{1}{(C_1+1)\sqrt{1+M^2}}$. Not changing the choice $\tau_0=\frac{1}{2}T^*$, we still have
$$
\|Dw\|_{L^{\infty}(\mathbb{R}^n\times[\tau_0,T_1^*))}\leq M.
$$
with the same constant $M=M(n,\|c\|_{C^2(\mathbb{R}^n)},N_0,\delta)>0$ by applying the proofs of Propositions \ref{prop:timederivative}, \ref{prop:gradientestimate}. Then, we can extend the solution on time interval $[0,T_2^*)$ with $T_2^*=T^*+\frac{2}{(C_1+1)\sqrt{1+M^2}}$ as we just did from $[0,T^*)$ to $[0,T_1^*)$. We inductively proceed to conclude the solution exists for all time.

The estimate \eqref{line:timederivativeofw} is a simple consequence of Propositions \ref{prop:shorttime}, \ref{prop:timederivative}.
\end{proof}

\section*{Acknowledgements}
The author would like to thank Olivier Ley, Qing Liu, Hiroyoshi Mitake, Norbert Pozar, Hung V. Tran, Yifeng Yu for helpful discussions and comments.


\begin{thebibliography}{00}

%\bibitem{BBL}
%G. Barles, S. Biton, O. Ley.
%A geometrical approach to the study of unbounded solutions of quasilinear parabolic equations.
%\emph{Arch. Rational Mech. Anal.},
%162 (2002), no. 4, 287--325.

\bibitem{BBBL}
G. Barles, S. Biton, M. Bourgoing, O. Ley.
Uniqueness results for quasilinear parabolic equations through viscosity solutions' methods.
\emph{Calc. Var. PDE.},
18 (2003), 159--179.

%\bibitem{BCN}
%G. Barles, A. Cesaroni, M. Novaga.
%Homogenization of fronts in highly heterogeneous media.
%\emph{SIAM J. Math. Anal.},
%43 (2011), no. 1, 212--227.

\bibitem{CM}
L. A. Caffarelli, R. Monneau.
Counter-example in three dimension and homogenization of geometric motions in two dimension.
\emph{Arch. Rational Mech. Anal.},
212 (2014), 503--574.

\bibitem{CL}
L. A. Caffarelli, R. de la Llave.
Planelike minimizers in periodic media.
\emph{Comm. Pure Appl. Math.},
54 (2001), no. 12, 1403--1441.

%\bibitem{CSM}
%F. Camilli, A. Cesaroni, C. Marchi.
%Homogenization and vanishing viscosity in fully nonlinear elliptic equations: rate of convergence estimates.
%\emph{Adv. Nonlinear Stud.},
%11 (2011), no. 2, 405--428.

\bibitem{CI}
I. Capuzzo-Dolcetta, H. Ishii.
On the rate of convergence in homogenization of Hamilton-Jacobi equations.
\emph{Indiana Univ. Math. J.},
50 (2001), no. 3, 1113--1129.

\bibitem{CLS}
P. Cardaliaguet, P.-L. Lions, P. E. Souganidis.
A discussion about the homogenization of moving interfaces.
\emph{J. Math. Pures Appl.},
(9) 91 (2009), no. 4, 339--363.

\bibitem{CN}
A. Cesaroni, M. Novaga.
Long-time behavior of the mean curvature flow with periodic forcing.
\emph{Comm. Part. Diff. Equa.},
38 (2011), no. 5, 780--801.

%\bibitem{CGG}
%Y.G. Chen, Y. Giga, S. Goto.
%Uniqueness and existence of viscosity solutions of generalized mean curvature flow equations.
%\emph{J. Diff. Geom.},
%33 (1991), no. 3, 749--786.

\bibitem{CB}
B. Craciun, K. Bhattacharya.
Effective motion of a curvature-sensitive interface through a heterogeneous medium.
\emph{Interfaces Free Bound.},
6 (2004), no. 2, 151--173.

\bibitem{CIL}
M. G. Crandall, H. Ishii, P.-L. Lions.
User's guide to viscosity solutions of second order partial differential equations.
\emph{Bull. Amer. Math. Soc. (N.S.)},
27 (1992), 1--67.

\bibitem{DKY}
N. Dirr, G. Karali, N. K. Yip.
Pulsating wave for mean curvature flow in inhomogeneous medium.
\emph{European J. Appl. Math},
19 (2008), no. 6, 661--699.

\bibitem{EH}
K. Ecker, G. Huisken.
Mean curvature evolution of entire graphs.
\emph{Ann. Math.},
130 (1989), no. 3, 453--471.

\bibitem{E}
L. C. Evans.
Periodic homogenisation of certain fully nonlinear partial differential equations.
\emph{Proc. Roy. Soc. Edinburgh Sect. A},
120 (1992), no. 3-4, 245--265.

%\bibitem{ES}
%L. C. Evans, J. Spruck.
%Motion of level sets by mean curvature. I.
%\emph{J. Diff. Geom.},
%33 (1991), 635--681.

\bibitem{GK}
H. Gao, I. Kim.
Head and tail speeds of mean curvature flow with forcing.
\emph{Arch. Rational Mech. Anal.},
235 (2020), no. 4, 287--654.

\bibitem{GLXY}
H. Gao, Z. Long, J. Xin, Y. Yu.
Existence of effective burning velocity in cellular flow for curvature G-equation via game analysis.
\emph{J. Geom. Anal.},
34:81 (2024).

%\bibitem{GGIS}
%Y. Giga, S. Goto, H. Ishii, M.-H. Sato.
%Comparison principle and convexity preserving properties for singular degenerate parabolic equations on unbounded domains.
%\emph{Indiana Univ. Math. J.},
%40 (1991), 443--470.

\bibitem{HJ}
Y. Han, J. Jang.
Rate of Convergence in Periodic Homogenization for Convex Hamilton-Jacobi Equations with Multiscales.
\emph{Nonlinearity},
36 (2023), no. 19, 5279--5297.

%\bibitem{I}
%N. Ishimura.
%Curvature evolution of plane curves with prescribed opening angle.
%\emph{Bull. Austral. Math. Soc.},
%52 (1995), no. 2, 287--296.

\bibitem{JKMT}
J. Jang, D. Kwon, H. Mitake, H. V. Tran.
Level-set forced mean curvature flow with the Neumann boundary condition.
\emph{Journal de Math. Pures et Appl.},
168 (2022), 143--167.

%\bibitem{JTY}
%W. Jing, H. V. Tran, Y. Yu.
%Effective fronts of polytope shapes.
%\emph{Minimax Theory Appl.},
%5 (2020), no. 2, 347--360.

%\bibitem{KS}
%R. V. Kohn, S. Serfaty.
%A deterministic-control-based approach to motion by mean curvature.
%\emph{Comm. Pure. Appl. Math.},
%59 (2006), 344--407.

%\bibitem{LSU}
%O. A. Lady\v{z}enskaja, V. A. Solonnikov, and N. N. Ural'ceva.
%\emph{Linear and quasilinear equations of parabolic type.}
%Translated from the Russian by S. Smith. Translations of Mathematical Monographs,
%American Mathematical Society, Providence, R.I., 1968.

\bibitem{LPV}
P.-L. Lions, G. Papanicolau, S. R. S. Varadhan.
Homogenization of Hamilton-Jacobi equations.
\emph{unpublished work}
(1987).

\bibitem{LS}
P.-L. Lions, P. E. Souganidis.
Homogenization of degenerate second-order PDE in periodic and almost periodic environments and applications.
\emph{Ann. Inst. H. Poincar\'{e} Anal. Non Lin\'{e}aire},
22 (2005), no. 5, 667--677.

%\bibitem{M}
%P. Morfe.
%On the homogenization of second order level set pde in periodic media.
%\emph{arXiv:2011.15062 [math.AP].}

\bibitem{MMTXY}
H. Mitake, C. Mooney, H. V. Tran, J. Xin, Y. Yu.
Bifurcation of homogenization and nonhomogenization of the curvature G-equation with shear flows.
arXiv:2303.16304 [math.AP] (2023).

\bibitem{MTY}
H. Mitake, H. V. Tran, Y. Yu.
Rate of convergence in periodic homogenization of Hamilton-Jacobi equations: the convex setting.
\emph{Arch. Ration. Mech. Anal.},
233 (2019), no. 2, 901--933.

\bibitem{QTY}
J. Qian, T. Sprekeler, H. V. Tran, Y. Yu.
Optimal rate of convergence in periodic homogenization of viscous Hamilton-Jacobi equations,
arXiv:2402.03091 [math.AP] (2024).

\bibitem{Tran}
H. V. Tran.
\emph{Hamilton-Jacobi equations: theory and applications}, 
AMS Graduate Studies in Mathematics, 2021.

\bibitem{TY}
H. V. Tran, Y. Yu.
Optimal convergence rate for periodic homogenization of convex Hamilton-Jacobi equations.
To appear in \emph{Indiana Univ. Math. Journal}.

\bibitem{T}
S. N. T. Tu.
Rate of convergence for periodic homogenization of convex Hamilton-Jacobi equations in one dimension.
\emph{Asymptotic Analysis},
121 (2021), no. 2, 171--194.
\end{thebibliography}
\end{document}